\newcommand{\e}{\mathrm{e}}
\newcommand{\eps}{\varepsilon}
\newcommand{\norm}[2]{\|{#1}\|_{#2}}
\newcommand{\tnorm}[1]{\left|\!\!\;\left|\!\!\;\left| {#1}
                       \right|\!\!\;\right|\!\!\;\right|}
\newcommand{\U}{\mathcal{U}}
\newcommand{\PS}{\mathcal{P}}
\newcommand{\scp}[1]{\left\langle #1 \right\rangle}
\newtheorem{theorem}{Theorem}[section]
\newtheorem{lemma}[theorem]{Lemma}
\newtheorem{remark}[theorem]{Remark}
\newtheorem*{Notation*}{Notation}
\journal{}
\begin{document}

\begin{frontmatter}


\title{A convection-diffusion problem with a large shift on Dur\'{a}n meshes}
\author[MBr]{Mirjana Brdar\corref{cor1}}
\ead{mirjana.brdar@uns.ac.rs}
\author[SF]{Sebastian Franz}
\ead{ sebastian.franz@tu-dresden.de}
\author[HGR]{Hans-G\"{o}rg Roos}
\ead{hans-goerg.roos@tu-dresden.de}
\address[MBr]{Faculty of Technology Novi Sad, University of Novi Sad,
Bulevar cara Lazara 1, 21000 Novi Sad, Serbia}
\address[SF]{Institute of Scientific Computing, Technische Universit\"at Dresden, Germany}
\address[HGR]{Institute of Numerical Mathematics, Technische Universit\"at Dresden, Germany}
\cortext[cor1]{Corresponding author}

\begin{abstract}
A convection-diffusion problem with a large shift in space is considered. Numerical analysis of high order finite element methods on layer-adapted Dur\'{a}n type meshes, as well as on coarser Dur\'{a}n type meshes in places where weak layers appear, is provided. The theoretical results are confirmed by numerical experiments.
\end{abstract}

\begin{keyword}
	spatial large shift \sep singularly perturbed \sep  Galerkin method \sep Dur\'{a}n mesh
	
	{\em AMS Mathematics Subject Classification (2010)}: 65M12, 65M15, 65M60
\end{keyword}

\end{frontmatter}

\section{Introduction}

\label{sec:intro}
Singularly perturbed problems with delay are differential equations that allow
past actions to be included in mathematical models, bringing the model closer to real-world
phenomena. Their solution depends not only on on the solution at a current stage, but also on the
solution at some past stages. This type of problem occurs in the control theory and biosciences
\cite{Glizer, LongtinMilton, Wazewska}, and in the study of chemostat models, circadian rhythms,
epidemiology, the respiratory system, tumour growth and neural networks. Although singularly
perturbed problems have been studied intensively in recent years, there are not many papers on
these problems with a on these problems with large displacements.

Here we consider the following convection-diffusion problem with a large shift: Find $u$ such that
  \begin{subequations}\label{prob}
     \begin{align}
       - \varepsilon u''(x) -b(x)u'(x)+ c(x) u(x)+d(x)u(x-1)& =f(x), \quad x\in \Omega=(0,2), \label{1}\\
      u(2)&=0,\label{2}\\
      u(x)&=\Phi(x), \quad x\in (-1,0],\label{3}
          \end{align}
  \end{subequations}
  where $b\geq\beta>0,$ $d\geq 0$ and   $0<\varepsilon\ll 1$ is a small perturbation parameter.
  We assume $\displaystyle c-\frac{b'}{2}-\frac{\|d\|_{L^{\infty}(1,2)}}{2}\geq\gamma>0,$ as well as $\Phi(0) = 0$, which is not a restriction for the function $\Phi,$ since this condition can always be guaranteed by a simple transformation. Thus, it holds $u\in H_0^1(\Omega).$

 For convection dominated singularly perturbed problems with an additional shift you can find only a few paper, see \cite{RS, SR1, SR2}, where authors consider a negative coefficient $d$, which supports a maximum principle. Then they use the finite-difference method on layer-adapted meshes. Here we consider finite element methods of arbitrary
order for positive coefficients $d$. The solution decomposition for this kind of problem we proved in \cite{BFLR}, as well as the error analysis on a standard S-type mesh. Our problem has an exponential layer near $x=0$ and one weak layer near $x=1$ whose appearance is caused by the delay.

Numerical methods for singularly perturbed problems typically use some known
behavior of the exact solution to design a priori adapted meshes in order to approximate
boundary layers well. Probably the most well-known approximations of this kind are those based on the so-called Shishkin meshes (see \cite{RST} and references therein).
Dur\'{a}n and Lombardi in \cite{Duran} introduced a new graded mesh in order to solve a
convection-diffusion model problem by standard bilinear finite elements. They
proved an (almost) optimal error estimate. Up to the logarithmic factor, those estimates are valid
uniformly in the perturbation parameter. Hence, the graded meshes could be
an excellent replacement for the well-known piecewise uniform Shishkin mesh
which has transition point(s) dividing a domain on coarse and fine mesh regions.
Indeed, according to some numerical experiments, the graded mesh procedure
seems to be more robust in the sense that the numerical results are not strongly
affected by variations of parameters which define the mesh. This is the reason why we used the Dur\'{a}n mesh in the paper. Since the problem has a weak layer, we constructed a coarser Dur\'{a}n mesh based on \cite{BFLR, Roos}.

The outline of the paper is as follows. In Section 2, we give the solution decomposition and introduce the recursively defined graded mesh. The following section contains a description of the finite element method and error analysis for the stationary singularly perturbed shift problem and the main convergence result.
In Section 4 we present a coarser mesh of Dur\'{a}n-type and numerical analysis on this mesh for problem (\ref{prob}). Finally, Section 5 provides some numerical results supporting our  theoretical analysis.

  \vspace{2mm}

  \begin{Notation*}
    We use the standard notation of Sobolev spaces, where, for a set $D$,  $\|\cdot\|_{L^2(D)}$ is the $L^2-$norm and $\scp{\cdot,\cdot}_D$
    is the standard scalar product in $L^2(D).$ Also, we write $A\lesssim B$ if there exists a generic positive constant $C$ independent of the perturbation parameter $\varepsilon$ and the mesh,
    such that $A\leq C B$.
  \end{Notation*}


  \section{Solution decomposition and mesh}
 The solution decomposition of this problem proved in \cite{BFLR} we give in the following theorem.
  \begin{theorem}\label{th1} Let $k\geq 0$ be a given integer and the data of (\ref{prob}) smooth enough. Then it holds
     \[
      u=S+E+W,
    \]
   where for any $l\in\{0,1,\dots,k\}$ it holds
  \begin{align*}
      \norm{S^{(\ell)}}{L^2(0,1)}+\norm{S^{(\ell)}}{L^2(1,2)} & \lesssim 1,&
      |E^{(\ell)}(x)|&\lesssim \eps^{-\ell}\e^{-\beta \frac{x}{\eps}},\quad x\in[0,2],\\
      &&|W^{(\ell)}(x)|&\lesssim \begin{cases}
                                   0,&x\in(0,1),\\
                                   \eps^{1-\ell}\e^{-\beta \frac{(x-1)}{\eps}},& x\in(1,2).
                                 \end{cases}
    \end{align*}
  \end{theorem}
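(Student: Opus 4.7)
The plan exploits the sequential structure of (\ref{prob}): on $(0,1)$ the shifted term $d(x)\Phi(x-1)$ is pure data, so the equation there is a standard convection-diffusion problem with matching value $u(1)$ playing the role of the right boundary datum; on $(1,2)$ the delay $d(x)u(x-1)$ couples back to $u|_{(0,1)}$, and the equation there has $u(2)=0$ together with the same matching value at $x=1$. Because $b\ge\beta>0$, characteristics of the reduced first-order equation travel from $x=2$ toward $x=0$, so the only strong boundary layer can sit at $x=0$, and any singularity at $x=1$ must be weak.

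The first step is to construct $S$ via a truncated asymptotic expansion $S=\sum_{j=0}^{k}\eps^{j}S_{j}$, with $S_0$ the solution of the reduced delay problem obtained by a shooting argument that makes $S_0$ continuous across $x=1$ and satisfies $S_0(2)=0$. Higher correctors $S_{j+1}$ are driven by $S_j''$ and solve the same linear first-order operator with smoother inhomogeneity. Since $b,c,d,f,\Phi$ are smooth separately on $(0,1)$ and on $(1,2)$, standard linear-ODE estimates yield $\norm{S^{(\ell)}}{L^2(0,1)}+\norm{S^{(\ell)}}{L^2(1,2)}\lesssim 1$ for $\ell\le k$. However, $S'$ generally jumps at $x=1$: the quantities $d\Phi(x-1)$ and $d\,S_0(x-1)$ meet at $x=1$ in value (both vanish, since $\Phi(0)=S_0(0)=0$ after the stated transformation) but not in slope, and this mismatch is what the weak layer $W$ will have to absorb.

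Next I would define $E$ on $\Omega$ as the solution of $-\eps E''-bE'+cE+dE(\cdot-1)=0$ with $E(0)=-S(0)$ and $E(2)=0$, so that $u-S-E$ has zero trace at both endpoints modulo the contribution of $W$. The sign condition $c-b'/2-\norm{d}{L^\infty(1,2)}/2\ge\gamma>0$ gives coercivity of the shifted bilinear form and allows a maximum-principle-type barrier of the form $C\e^{-\beta x/\eps}$, yielding $|E(x)|\lesssim\e^{-\beta x/\eps}$; differentiating the equation $\ell$ times and iterating the barrier argument costs one factor $\eps^{-1}$ per derivative. Finally, $W$ is set identically zero on $(0,1)$ and defined on $(1,2)$ as the layer that absorbs the order-$\eps$ residual created by the derivative jump of $S$ at $x=1$; since the driving discrepancy is of size $\eps$, the amplitude of $W$ is $O(\eps)$, which produces the bound $|W^{(\ell)}(x)|\lesssim\eps^{1-\ell}\e^{-\beta(x-1)/\eps}$.

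The main technical obstacle is the bookkeeping at $x=1$: one has to verify precisely that the mismatch between the left and right pieces of the regular part vanishes in value and only appears at order $\eps$ in the first derivative, so that $W$ is genuinely a weak layer rather than a strong one. A related subtlety is that $E$, plugged into the delay term on $(1,2)$, produces a contribution $d(x)E(x-1)$ multiplied by the transcendentally small prefactor $\e^{-\beta/\eps}$; this has to be identified and cleanly absorbed into the equation for $W$ so that the latter inherits only its intended $O(\eps)$ amplitude and does not pick up spurious order-one terms.
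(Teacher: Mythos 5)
The paper does not actually prove Theorem~\ref{th1}: the decomposition is quoted from \cite{BFLR}, so there is no in-paper argument to compare against, and your proposal must stand on its own. Your overall architecture (truncated outer expansion $S$ fixed by a shooting/matching condition at $x=1$, a strong boundary-layer corrector $E$ at $x=0$, a weak interior corrector $W$ at $x=1$) is the right kind of argument, but you misidentify the mechanism that creates $W$. You dismiss the delayed term $d(x)E(x-1)$ on $(1,2)$ as carrying ``the transcendentally small prefactor $\e^{-\beta/\eps}$''. That is backwards: $E$ decays like $\e^{-\beta x/\eps}$ away from $x=0$, so for $x\in(1,1+C\eps)$ the argument $x-1$ lies near $0$, where $E$ is of size $O(1)$. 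Hence $d(x)E(x-1)$ is an $O(1)$-amplitude forcing concentrated in an $\eps$-neighbourhood of $x=1^{+}$, and it is precisely this term that drives the weak layer: pushing a unit-amplitude forcing of the form $\e^{-\beta(x-1)/\eps}$ through $-\eps\pt_x^2-b\pt_x+c$ yields a response of amplitude $O(\eps)$, which is exactly the claimed bound $|W|\lesssim\eps$, $|W^{(\ell)}|\lesssim\eps^{1-\ell}$. If you discard this term as negligible you cannot write down the correct problem for $W$, and the construction collapses. For the same reason, defining $E$ as a solution of the full delay equation (with $dE(\cdot-1)$ included) is inconsistent with the stated bound $|E^{(\ell)}(x)|\lesssim\eps^{-\ell}\e^{-\beta x/\eps}$ on all of $[0,2]$: such an $E$ would itself contain the interior layer at $x=1$, where $|E'|=O(1)$ vastly exceeds $\eps^{-1}\e^{-\beta/\eps}$. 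The corrector $E$ must be built for the non-delayed operator, with $dE(\cdot-1)$ then moved into the right-hand side of the problem defining $W$.

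Your bookkeeping at $x=1$ is also off in two compensating ways. The jump of $S_0'$ at $x=1$ is, up to sign, $d(1)S_0(0)/b(1)$, and $S_0(0)$ is generically nonzero (that is exactly why the strong layer at $x=0$ is needed), so the derivative mismatch is $O(1)$, not $O(\eps)$; this is consistent with the theorem stating the bounds on $S$ separately over $(0,1)$ and $(1,2)$. Moreover, the inference ``derivative discrepancy of size $\eps$ gives amplitude $O(\eps)$'' has the wrong scaling: a layer profile $A\bigl(1-\e^{-\beta(x-1)/\eps}\bigr)$ has derivative $A\beta/\eps$ at $x=1^{+}$, so absorbing a derivative jump of size $J$ with zero value jump requires amplitude $A\sim\eps J$. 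An $O(1)$ derivative jump therefore already produces the weak, $O(\eps)$-amplitude layer, whereas the $O(\eps)$ jump you try to verify would produce only an $O(\eps^2)$ layer. You land on the correct amplitude for $W$, but only because two incorrect estimates cancel; a careful referee of \cite{BFLR}-style arguments would require you to track the $O(1)$ forcing $dE(\cdot-1)$ and the $O(1)$ derivative jump explicitly, and to close the argument with a stability (coercivity) estimate for the remainder, which your sketch does not address.
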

  Thus, $E$ is the layer function corresponding to the left boundary, while
  $W$ is an interior layer function, and $S$ represents the smooth part.

  Knowing how the layers are structured allows us to create a layer-adapted mesh that resolves the layers. We will modify the recursively defined graded mesh of the Dur\'{a}n type to our problem. Its advantages are the simple construction and generation of mesh points (without transition point(s)) and a certain robustness property. More precisely, if we are approximating a singularly perturbed problem with a mesh that has been adapted a priori,  we can expect that a mesh designed for one value of the small parameter will perform well for larger values of the small parameter as well. In this respect, the recursively graded meshes have better behavior in numerical experiments.

  We first define the points on the interval $[0, 1]$ and then the rest of the domain to construct this mesh.
  Let $H\in(0,1)$ be arbitrary. Define the number M

  by
  \begin{equation}\label{M}
    M=\left\lceil\left\lceil\frac{1}{H}\right\rceil - \frac{\ln\left(H\eps\left\lceil\frac{1}{H}\right\rceil\right)}{\ln(1+H)}\right\rceil.
  \end{equation}
  We define mesh points recursively in the following way:
  \begin{equation}\label{Dmesh1}
    x_i=\begin{cases}
          0, & i=0,\\
         i H\eps,& i=1\leq i \leq\left\lceil\frac{1}{H}\right\rceil ,\\
          (1+H)x_{i-1}, & \left\lceil\frac{1}{H}\right\rceil < i\leq M-1,\\
          1,& i=M,\\
          1+x_{i-M},& M\leq i\leq 2M.\\
        \end{cases}
  \end{equation}
  If the interval $(x_{M-1}, 1)$ is too small in relation to $(x_{M-2}, x_{M-1})$, we simply omit the mesh point $x_{M-1}$. The total number of mesh subintervals is $N=2M.$ It depends on the parameters $H$ and conditions \eqref{M}, see \cite{Duran}. Furthermore, the inequality
  \begin{equation}\label{H}
    H\lesssim N^{-1}\ln(1/\eps)
  \end{equation}
  holds.

   For mesh step sizes $h_i=x_i-x_{i-1}$, $1\leq i\leq 2M$, is valid $CH\eps\leq h_i\leq H,$ where $C$ is a constant independent of $\eps$ and the number of mesh points. It also applies
  \begin{equation}
    \begin{array}{lll}
      h_i=H\eps, & i\in\{1,\dots,\left\lceil\frac{1}{H}\right\rceil\}\cup\{M+1,\dots,M+1+\left\lceil\frac{1}{H}\right\rceil\},\\[0.5ex]
      h_i\leq Hx,&  i\in\{\left\lceil\frac{1}{H}\right\rceil+1,\dots,M\}\cup\{M+2+\left\lceil\frac{1}{H}\right\rceil,\dots,2M\},\\[0.5ex]
          \end{array}
  \end{equation}
where $x\in[x_{i-1},x_i].$

\begin{remark}
For an arbitrarily chosen parameter H in the Dur\'{a}n mesh, $2M$ steps of the mesh are obtained, which is generally not comparable to the number of steps in other papers that deal with the same or similar issues. An approach that eliminates this shortcoming is given in the paper \cite{RBT}.
\end{remark}

\section{Finite element method and error estimates}

The bilinear and linear form for problem (\ref{prob}) are given by
  \begin{align}
    B(u,v)&:=\eps\scp{u',v'}_{\Omega}+\scp{cu-bu',v}_{\Omega}+\scp{du(\cdot -1),v}_{(1,2)}\notag\\
          &= \scp{f,v}_{\Omega}-\scp{d\phi(\cdot -1),v}_{(0,1)}=:F(v)
  \end{align}
  for $u,v\in \U$.

  We use $\U_N:=\{v\in H_0^1(\Omega):v|_\tau\in\PS_k(\tau)\}$ for discrete space,
  where $\PS_k(\tau)$ is the space of polynomials of degree $k$ at most on a cell $\tau$ of the mesh.
  Let $I$ be the standard Lagrange-interpolation operator into $\U_N$ using equidistant points or any other suitable
  distribution of points.

  The finite element method is given by: Find $u_N\in \U_N$ such that for all $v\in \U_N$ it holds
  \begin{equation}\label{eq:method}
    B(u_N,v)=F(v).
  \end{equation}

  The bilinear form is coercive in the energy norm $\tnorm{u}^2:=\eps\norm{u'}{L^2(\Omega)}^2 + \gamma \norm{u}{L^2(\Omega)}^2$ and satisfies the Galerkin orthogonality $B(u-u_N,v)=0$ for all $v\in \U_N.$

  \begin{lemma}\label{le1}
    For the standard piecewise Lagrange interpolation operator on the graded mesh \eqref{Dmesh1} we have
    \begin{align}
        \norm{u-Iu}{L^2(\Omega)} & \lesssim H^{k+1},\label{D1}\\
        \norm{(u-Iu)'}{L^2(\Omega)} & \lesssim {\eps}^{-1/2}H^k.\label{D2}
    \end{align}
  \end{lemma}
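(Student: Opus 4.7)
The plan is to use the solution decomposition $u=S+E+W$ from Theorem \ref{th1} and estimate the interpolation error for each component separately, then combine by the triangle inequality.

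First I would dispatch the smooth part $S$. The standard local polynomial interpolation bounds give $\norm{S-IS}{L^2(\tau)} \lesssim h_\tau^{k+1} |S|_{H^{k+1}(\tau)}$ and $\norm{(S-IS)'}{L^2(\tau)} \lesssim h_\tau^{k} |S|_{H^{k+1}(\tau)}$ on each cell $\tau$. Summing over all cells and using $h_\tau \leq H$ together with $|S|_{H^{k+1}(\Omega)} \lesssim 1$ from Theorem \ref{th1} delivers both targeted bounds for $S$ (with even the stronger rate $H^k$ in place of $\eps^{-1/2}H^k$ for \eqref{D2}).

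For the boundary layer $E$ I would split the interval $[0,1]$ into the uniform fine region $\Omega_f=[0,x_{\lceil 1/H\rceil}]$ of width $\approx \eps$ and the graded coarse region $\Omega_c$. Applying Theorem \ref{th1} with $k$ replaced by $k+1$, the pointwise derivative bound integrates to $\norm{E^{(k+1)}}{L^2(\Omega_f)} \lesssim \eps^{-k-1/2}$. Combined with $h_i=H\eps$ this immediately yields
\[
\norm{E-IE}{L^2(\Omega_f)} \lesssim (H\eps)^{k+1}\eps^{-k-1/2} = H^{k+1}\eps^{1/2}, \qquad
\norm{(E-IE)'}{L^2(\Omega_f)} \lesssim (H\eps)^{k}\eps^{-k-1/2} = \eps^{-1/2}H^{k}.
\]
On $\Omega_c$ the mechanism I would use is to pass through $L^\infty$: bound $\norm{E-IE}{L^\infty(\tau_i)} \lesssim h_i^{k+1}\norm{E^{(k+1)}}{L^\infty(\tau_i)}$, insert the pointwise decay and the grading inequality $h_i \lesssim H x_{i-1}$ to factor out $H^{k+1}$, and recognize the remaining quantity $\sum_{\tau_i \subset \Omega_c} h_i (x_{i-1}/\eps)^{2(k+1)} e^{-2\beta x_{i-1}/\eps}$ as a Riemann sum for $\eps\int_0^\infty t^{2(k+1)} e^{-2\beta t}\,dt = O(\eps)$. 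This yields $\norm{E-IE}{L^2(\Omega_c)} \lesssim H^{k+1}\eps^{1/2}$, and the analogous calculation for the derivative produces $\norm{(E-IE)'}{L^2(\Omega_c)} \lesssim H^{k}\eps^{-1/2}$. The interior weak layer $W$ on $[1,2]$ is treated by the same mesh split shifted by $1$, and the additional factor $\eps$ appearing in the bounds of Theorem \ref{th1} for $W$ only improves the estimates.

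The step I expect to be the main obstacle is precisely the coarse-region analysis of $E$: one has to combine the grading condition $h_i \lesssim H x$ with the exponential decay $e^{-\beta x/\eps}$ so that the blow-up $\eps^{-(k+1)}$ from $E^{(k+1)}$ and the algebraic factor $(x/\eps)^{k+1}$ from the mesh combine into a bounded function of $x/\eps$, and then to identify the resulting discrete sum as a Riemann integral of magnitude $O(\eps)$. Once this Riemann-sum argument is in place, the fine region and the smooth part fall out by direct computation.
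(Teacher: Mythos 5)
Your argument is correct and follows essentially the same route as the paper: decompose $u=S+E+W$, split each unit subinterval into the uniform fine part (where $h_i=H\eps$) and the graded part (where $h_i\le Hx$), and play the grading against the exponential decay to extract the extra factor $\eps^{1/2}$ from the layer terms. The only real difference is in the graded region, where the paper stays in $L^2$ and evaluates $H^{2(k+1)}\int_\eps^1 x^{2(k+1)}\eps^{-2(k+1)}\e^{-2\beta x/\eps}\,\dx\lesssim \eps H^{2(k+1)}$ by a single substitution, whereas you pass through $L^\infty$ and a left-endpoint Riemann sum; that sum is indeed $O(\eps)$, but since consecutive nodes satisfy $x_i=(1+H)x_{i-1}$ the integrand can drop by the unbounded factor $\e^{2\beta Hx_{i-1}/\eps}$ across a single coarse cell, so the cell-by-cell comparison should be made against the integral of $t^{2(k+1)}\e^{-\beta t}$ (sacrificing half the decay constant to absorb the endpoint ratio) --- a standard fix, after which your bounds coincide with the paper's.
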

  \begin{proof}
    In the proof we use the norm definitions, the solution decomposition and standard interpolation error estimates, given on any cell $\tau_i$ with width $h_i$ and $1\leq s\leq k+1$ and $1\leq t\leq k$ by
    \begin{subequations}\label{inter}
      \begin{align}
        \|v-Iv\|_{L^2(\tau_i)}&\lesssim h_i^s\|v^{(s)}\|_{L^2(\tau_i)},\label{int1}\\
            \|(v-Iv)'\|_{L^2(\tau_i)}&\lesssim h_i^t\|v^{(t+1)}\|_{L^2(\tau_i)},\label{int2}
      \end{align}
    \end{subequations}
    for $v$ smooth enough. We obtain
    \begin{align*}
      &\|S-IS\|_{L^2(0,1)}^2
        =  \sum\limits_{i=1}^M\|S-IS\|_{L^2(I_i)}^2\\
        &\lesssim \sum\limits_{i=1}^{\left\lceil\frac{1}{H}\right\rceil}h_i^{2(k+1)}\|S^{(k+1)}\|_{L^2(I_i)}^2+ \sum\limits_{i=\left\lceil\frac{1}{H}\right\rceil+1}^M h_i^{2(k+1)}\|S^{(k+1)}\|_{L^2(I_i)}^2\\
        &\lesssim  \sum\limits_{i=1}^{\left\lceil\frac{1}{H}\right\rceil}(H\eps)^{2(k+1)}\|S^{(k+1)}\|_{L^2(I_i)}^2+ \sum\limits_{i=\left\lceil\frac{1}{H}\right\rceil+1}^M H^{2(k+1)}\|x^{k+1}S^{(k+1)}\|_{L^2(I_i)}^2
                  \lesssim   H^{2(k+1)}
    \end{align*}
    where $I_i=(x_{i-1},x_i).$ The same estimate we get on $[1,2].$ In the same way we get
    \[\|(S-IS)'\|_{L^2(\Omega)}^2  \lesssim  H^{2k}.\]
    For the boundary layer part we obtain
    \begin{align*}
      \|&E-IE\|_{L^2(0,1)}^2
        \lesssim
        \sum\limits_{i=1}^{\left\lceil\frac{1}{H}\right\rceil} (H\eps)^{2(k+1)}\|E^{(k+1)}\|_{L^2(I_i)}^2+ \sum\limits_{i=\left\lceil\frac{1}{H}\right\rceil+1}^M H^{2(k+1)}\|x^{k+1}E^{(k+1)}\|_{L^2(I_i)}^2 \\
         &\lesssim  (H\eps)^{2(k+1)}\int\limits_0^{\eps}\eps^{-2(k+1)}e^{-\frac{2\beta x}{\eps}}dx+ H^{2(k+1)}\int\limits_{\eps}^{1}x^{2(k+1)}\eps^{-2(k+1)}e^{-\frac{2\beta x}{\eps}}dx
       \lesssim \eps H^{2(k+1)}.
    \end{align*}
    and the same estimate on $[1,2]$. Also, we get

    \begin{align*}
      \|(E-IE)'\|_{L^2(0,\eps)}^2
        \lesssim &
         \sum\limits_{i=1}^{\left\lceil\frac{1}{H}\right\rceil} (H\eps)^{2k}\|E^{(k+1)}\|_{L^2(I_i)}^2
        \lesssim  \eps^{-1}H^{2k},\\
%
      \|(E-IE)'\|_{L^2(\eps,1)}^2
        \lesssim &
         \sum\limits_{i=\left\lceil\frac{1}{H}\right\rceil+1}^M H^{2k}\|x^{k+1}E^{(k+1)}\|_{L^2(I_i)}^2
    \lesssim \eps H^{2k}.
    \end{align*}
    For interior layer function we have following estimates
    \begin{align*}
      \|&W-IW\|_{L^2(1,2)}^2\\
        &\lesssim
        \sum\limits_{i=M}^{M+\left\lceil\frac{1}{H}\right\rceil} (H\eps)^{2(k+1)}\|W^{(k+1)}\|_{L^2(I_i)}^2
                + \sum\limits_{i=M+\left\lceil\frac{1}{H}\right\rceil+1}^{2M} H^{2(k+1)}\|x^{k+1}W^{(k+1)}\|_{L^2(I_i)}^2\\
        &\lesssim (H\eps)^{2(k+1)}\int\limits_1^{1+\eps}(\eps^{1-(k+1)})^2e^{-\frac{2\beta (x-1)}{\eps}}dx
               + H^{2(k+1)}\int\limits_{1+\eps}^{2}x^{2(k+1)}(\eps^{1-(k+1)})^2e^{-\frac{2\beta (x-1)}{\eps}}dx\\
        &\lesssim  \eps^3 H^{2(k+1)},\\
%
          \|&(W-IW)'\|_{L^2(1,2)}^2
                \lesssim  (H\eps)^{2k}\int\limits_1^{1+\eps}\eps^{-2k}e^{-\frac{2\beta (x-1)}{\eps}}dx+H^{2k}\int\limits_{1+\eps}^2\eps^{-2k}x^{2k}e^{-\frac{2\beta (x-1)}{\eps}}dx
                \lesssim \eps H^{2k}.
    \end{align*}

    Using Theorem \ref{th1} and above estimates the statement of the lemma follows.
  \end{proof}

\begin{theorem}\label{th2}
  For the solution $u$ of problem (\ref{prob}) and the numerical solution $u_N$ of (\ref{eq:method}) on a Dur\'{a}n mesh (\ref{Dmesh1}) it holds
  \[\tnorm{u-u_N}\lesssim H^k.\]
\end{theorem}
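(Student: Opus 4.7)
The plan is the standard coercivity plus Galerkin orthogonality argument. Set $\eta := u_N - Iu \in \U_N$ and split $u - u_N = (u - Iu) + (Iu - u_N)$, so that
\[
\tnorm{u-u_N} \leq \tnorm{u-Iu} + \tnorm{\eta}.
\]
The first term is immediately controlled by Lemma \ref{le1}: $\eps\norm{(u-Iu)'}{L^2(\Omega)}^2 \lesssim \eps\cdot\eps^{-1}H^{2k} = H^{2k}$ and $\gamma\norm{u-Iu}{L^2(\Omega)}^2 \lesssim H^{2(k+1)}$, hence $\tnorm{u-Iu} \lesssim H^k$. For the second term, coercivity and Galerkin orthogonality give $\tnorm{\eta}^2 \lesssim B(\eta,\eta) = B(u - Iu, \eta)$, and everything reduces to bounding this last quantity by $H^k\tnorm{\eta}$.

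I would then expand $B(u-Iu,\eta)$ into four parts. For the diffusion term, Cauchy--Schwarz and Lemma \ref{le1} yield $\eps|\scp{(u-Iu)',\eta'}_\Omega| \leq \eps \cdot \eps^{-1/2}H^k \cdot \eps^{-1/2}\tnorm{\eta} = H^k\tnorm{\eta}$. For the reaction term, $|\scp{c(u-Iu),\eta}_\Omega| \lesssim H^{k+1}\tnorm{\eta}$. For the delay term, a change of variable gives $\norm{(u-Iu)(\cdot-1)}{L^2(1,2)} = \norm{u-Iu}{L^2(0,1)}$, so $|\scp{d(u-Iu)(\cdot-1),\eta}_{(1,2)}| \lesssim H^{k+1}\tnorm{\eta}$.

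The real obstacle is the convection term $\scp{b(u-Iu)',\eta}_\Omega$. A direct Cauchy--Schwarz loses $\eps^{-1/2}$, while integrating by parts and using only the combined $L^2$ bound $H^{k+1}$ from Lemma \ref{le1} yields $H^{k+1}/\sqrt{\eps}$, which is unacceptable since $H$ can be as large as $N^{-1}\ln(1/\eps)$. The remedy is to decompose $u - Iu = (S - IS) + (E - IE) + (W - IW)$ and treat each piece with the sharper component-wise bounds already established in the proof of Lemma \ref{le1}. For the smooth part we use $\norm{(S-IS)'}{L^2(\Omega)} \lesssim H^k$ directly, giving $H^k\tnorm{\eta}$ with no need for integration by parts. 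For the boundary layer $E$ and the interior layer $W$, we integrate by parts, pushing the derivative onto $b\eta$, and absorb the $\norm{\eta'}{L^2(\Omega)} \leq \tnorm{\eta}/\sqrt{\eps}$ using the improved $L^2$ estimates $\norm{E-IE}{L^2(\Omega)} \lesssim \sqrt{\eps}\,H^{k+1}$ and $\norm{W-IW}{L^2(1,2)} \lesssim \eps^{3/2}H^{k+1}$ from the proof of Lemma \ref{le1}; the $\sqrt{\eps}$ factors exactly cancel the $1/\sqrt{\eps}$ from the test function, producing contributions of order $H^{k+1}\tnorm{\eta}$.

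Collecting all four bounds gives $B(u-Iu,\eta) \lesssim H^k\tnorm{\eta}$, and dividing by $\tnorm{\eta}$ yields $\tnorm{\eta} \lesssim H^k$. Combined with $\tnorm{u-Iu} \lesssim H^k$, the triangle inequality closes the argument. The main technical point to be careful about is the convection term and, in particular, noting that the derivative estimate in Lemma \ref{le1} can only carry the smooth part, while the two layer parts must be handled via integration by parts paired with the $\sqrt{\eps}$-enhanced $L^2$ bounds that are visible inside the proof of Lemma \ref{le1}.
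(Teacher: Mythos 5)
Your proposal is correct and follows essentially the same route as the paper: triangle inequality, coercivity plus Galerkin orthogonality, and a component-wise treatment of the convection term in which the layer contribution is integrated by parts so that the $\sqrt{\eps}$-enhanced $L^2$ bounds from the proof of Lemma~\ref{le1} absorb the $\eps^{-1/2}$ coming from $\norm{\chi'}{L^2(\Omega)}\leq\eps^{-1/2}\tnorm{\chi}$. The only cosmetic difference is that the paper integrates by parts only for $E-IE$ (its displayed estimate keeps just the term $\scp{b(E-IE),\chi'}_{\Omega}$), handling $W-IW$ directly via $\norm{(W-IW)'}{L^2(1,2)}\lesssim\eps^{1/2}H^k$, whereas you integrate by parts for $W$ as well; both variants close the argument.
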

\begin{proof}
  Based on the triangle inequality $\tnorm{u-u_N}\leq\tnorm{u-Iu}+\tnorm{Iu-u_N}$, and using the evaluation of Lemma \ref{le1} for the first term, it is sufficient to evaluate the second term. Let $\eta:=u-Iu$ and $\chi:=Iu-u_N\in \U_N.$ Coercivity and Galerkin orthogonality with parts of the proof of Lemma \ref{le1} yield
  \begin{align*}
    \tnorm{\chi}&\leq B(\chi,\chi)=B(\eta,\chi)=\eps\scp{\eta',\chi'}_{\Omega}+\scp{c\eta-b\eta',\chi}_{\Omega}+\scp{d\eta(\cdot -1),\chi}_{(1,2)}\\
    &\lesssim H^k\tnorm{\chi}+\scp{b(E-IE),\chi'}_{\Omega}\lesssim H^k.\qedhere
  \end{align*}
\end{proof}

\section{On a coarser mesh}

Following the idea from \cite{Roos} for the weak layer, we use a mesh that is sparser than the one defined in (\ref{Dmesh1}). This is called a coarser mesh. Numerical results show that for small $k$ and reasonably small $\eps$ it is possible to use coarser meshes. For higher polynomial degrees, the weak layer should be resolved by a classical layer-adapted mesh.

Here, we construct the mesh in the following way.
Let $M$ be defined as in \eqref{M} and analogously
\begin{equation}\label{M2}
  M_2=\left\lceil\left\lceil\frac{1}{H}\right\rceil - \frac{\ln\left(H\eps^{\frac{k-1}{k}}\left\lceil\frac{1}{H}\right\rceil\right)}{\ln(1+H)}\right\rceil.
\end{equation}
Then the mesh nodes are given by

\begin{equation}\label{Dmesh2}
    x_i=\begin{cases}
          0, & i=0,\\
         i H\eps,& 1\leq i \leq\left\lceil\frac{1}{H}\right\rceil ,\\
          (1+H)x_{i-1}, & \left\lceil\frac{1}{H}\right\rceil < i\leq M-1,\\
          1,& i=M,\\
          1+(i-M)H\eps^{\frac{k-1}{k}},& M+1\leq i \leq M+\left\lceil\frac{1}{H}\right\rceil ,\\
           1+(1+H)(x_{i-1}-1), & M+\left\lceil\frac{1}{H}\right\rceil < i\leq  M+M_2-1,\\
          2,& i= M+M_2.
        \end{cases}
  \end{equation}

The mesh step sizes $h_i=x_i-x_{i-1}$ satisfy
\begin{equation}
    \begin{array}{lll}
      h_i=H\eps, & i\in\{1,\dots,\left\lceil\frac{1}{H}\right\rceil\}\\[0.5ex]
      h_i\leq Hx,&  i\in\{\left\lceil\frac{1}{H}\right\rceil+1,\dots,M\}\\[0.5ex]
      h_i=H\eps^{\frac{k-1}{k}},& i\in\{M+1,\dots,M+\left\lceil\frac{1}{H}\right\rceil\},\\[0.5ex]
      h_i\leq Hx,&  i\in\{M+\left\lceil\frac{1}{H}\right\rceil+1,\dots, M+M_2\}.
          \end{array}
  \end{equation}
where $x\in[x_{i-1},x_i].$

\begin{lemma}\label{le2}Let us assume $e^{-\eps^{-1/k}}\leq H^{k-1}.$ Then for the standard piecewise Lagrange interpolation operator on the graded mesh \eqref{Dmesh2} we have
          \begin{align}
        \norm{u-Iu}{L^2(\Omega)}
          & \lesssim H^{k+\frac{1}{2}},\label{D3}\\[0.5ex]
        \tnorm{u-Iu}
          & \lesssim H^k.\label{D4}
             \end{align}
      \end{lemma}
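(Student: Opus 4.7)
The plan is to follow the proof of Lemma~\ref{le1} closely, using the decomposition $u = S + E + W$ from Theorem~\ref{th1} together with the local interpolation estimates~\eqref{int1}--\eqref{int2}. Since the mesh on $[0,1]$ coincides with that of \eqref{Dmesh1}, the bounds for $S$ and $E$ on that subinterval carry over verbatim. For $S$ on $[1,2]$ the mesh still satisfies $h_i \leq H$ in the uniform portion and $h_i \leq H(x-1)$ in the graded portion, so the same argument as on $[0,1]$ gives contributions $H^{k+1}$ in $L^2$ and $H^k$ in $H^1$. For $E$ on $[1,2]$, one uses $|E^{(\ell)}(x)| \lesssim \eps^{-\ell} e^{-\beta/\eps}$; combined with the assumption $e^{-\eps^{-1/k}} \leq H^{k-1}$ (which implies $e^{-\beta/\eps}$ is polynomially small in $H$), the $\eps^{-(k+1)}$ blow-up is absorbed and the contribution is negligible compared to the target.

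The main new ingredient is the interior-layer part $W$ on $[1,2]$, where the fine-region step is the coarser $h_i = H\eps^{(k-1)/k}$. On this fine region $i \in \{M+1,\dots,M+\lceil 1/H\rceil\}$ I would estimate $\|W-IW\|_{L^2}$ in two different ways and take the minimum. The standard interpolation estimate~\eqref{int1} together with $|W^{(k+1)}(x)| \lesssim \eps^{-k} e^{-\beta(x-1)/\eps}$ and the change of variable $s = (x-1)/\eps$ yields
\[
\|W - IW\|_{L^2(\text{fine})}^2 \lesssim (H\eps^{\frac{k-1}{k}})^{2(k+1)} \cdot \eps^{-2k} \cdot \eps = H^{2(k+1)} \eps^{1 - 2/k}.
\]
On the other hand, since $\|W\|_{L^\infty(1,2)} \lesssim \eps$, the triangle inequality with stability of $I$ gives the $H$-independent bound $\|W - IW\|_{L^2(\text{fine})}^2 \lesssim \eps^3$. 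The minimum of the two is extremal when $\eps = H^k$, where both equal $H^{3k}$; since $3k \geq 2k+1$, in either case $\eps \leq H^k$ or $\eps \geq H^k$ the minimum is at most $H^{2k+1}$, producing the required $H^{k+\frac12}$ in \eqref{D3}. For the derivative, \eqref{int2} directly yields $\|(W-IW)'\|_{L^2(\text{fine})}^2 \lesssim H^{2k}\eps^{-1}$, so $\eps\|(W-IW)'\|_{L^2(\text{fine})}^2 \lesssim H^{2k}$, which is enough for \eqref{D4}.

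On the graded portion of $[1,2]$ (step $h_i \leq H(x-1)$) I would proceed exactly as for $W$ in Lemma~\ref{le1}: pulling the mesh factor out of \eqref{int1}/\eqref{int2} and using the weight $(x-1)^{k+1}|W^{(k+1)}(x)| \lesssim (x-1)^{k+1}\eps^{-k}e^{-\beta(x-1)/\eps}$, the substitution $s = (x-1)/\eps$ produces a convergent integral of the type $\int_0^\infty s^{2(k+1)}e^{-2\beta s}\mathrm{d}s$, giving $\|W-IW\|_{L^2(\text{graded})}^2 \lesssim \eps^3 H^{2(k+1)}$ and $\|(W-IW)'\|_{L^2(\text{graded})}^2 \lesssim \eps H^{2k}$, both strictly subordinate to the target.

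The main obstacle is the minimum-of-two-bounds argument for $W$ on the fine region: the pure interpolation bound loses a factor $\eps^{1-2/k}$ relative to Lemma~\ref{le1}, and only by combining it with the crude layer-size bound $\|W\|_{L^\infty} \lesssim \eps$ can the $H^{k+\frac12}$ rate be recovered uniformly. The assumption $e^{-\eps^{-1/k}} \leq H^{k-1}$ enters precisely where the $\eps^{-1/k}$ scale becomes visible, namely in bounding the $E$-contribution on $[1,2]$ and the tail contributions from $x-1 \gtrsim \eps^{(k-1)/k}$, guaranteeing that these remnants stay below $H^{k+\frac12}$ uniformly in $\eps$.
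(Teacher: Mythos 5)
Your proposal is correct, and its skeleton --- solution decomposition, verbatim reuse of the Lemma~\ref{le1} bounds for $S$ and $E$, and isolation of the interior layer $W$ on $[1,2]$ as the only genuinely new term --- matches the paper. Where you diverge is precisely at the critical step, the $L^2$-bound for $W-IW$ that produces the half power $H^{k+1/2}$. The paper, following the idea of Reibiger--Roos, works on the \emph{graded outer} region $(1+\eps^{\frac{k-1}{k}},2)$ and balances two \emph{low-order} weighted interpolation bounds, $\|W-IW\|\lesssim H\|xW'\|\lesssim \eps^{1/2}H^{k}$ against $\|W-IW\|\lesssim H^{2}\|x^{2}W''\|\lesssim \eps^{-1/2}H^{k+1}$, the assumption $e^{-\eps^{-1/k}}\leq H^{k-1}$ serving to convert the tail factor $e^{-\beta\eps^{-1/k}}$ into the power $H^{k-1}$ that appears in both bounds; the minimum of the two is then at most their geometric mean $H^{k+1/2}$. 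You instead place the balancing act on the \emph{fine} region $(1,1+\eps^{\frac{k-1}{k}})$, playing the full-order interpolation bound $H^{2(k+1)}\eps^{1-2/k}$ against the crude bound $\|W\|_{L^2}^2\lesssim\eps^{3}$, and you handle the outer region with the same weighted $(k+1)$-st order estimate as in Lemma~\ref{le1}. Both routes are legitimate and yield the same rates; yours has the merit of explicitly supplying the fine-region $L^2$ estimate for $W$, which the paper's written proof does not display, while the paper's low-order trick avoids any case distinction in $k$. One caution on your version: the bound $\|W-IW\|_{L^2(\text{fine})}^2\lesssim\eps^{3}$ does not follow from $\|W\|_{L^\infty}\lesssim\eps$ times the measure of the fine region alone (that only gives $\eps^{3-1/k}$, which for $k=1$ leaves the balanced minimum at $H^{8/3}$ rather than the needed $H^{3}$); you must also invoke the exponential decay of $W$, namely $\|W\|_{L^2(1,2)}^2\lesssim\int_1^2\eps^{2}e^{-2\beta(x-1)/\eps}\,\dx\lesssim\eps^{3}$, together with local stability of $I$. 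With that repair the argument closes for all $k\geq 1$.
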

  \begin{proof}

  Similarly to the proof of the previous lemma, using solution decomposition and estimates \eqref{int1} and \eqref{int2}, we obtain same estimates for $\|S-IS\|_{L^2(0,2)},$ $\|E-IE\|_{L^2(0,2)}.$ Using the assumption $e^{-\eps^{-1/k}}\leq H^{k-1}$ we get
  \begin{align*}
      \|(E-IE)'\|_{L^2(1,1+\eps^{\frac{k-1}{k}})}
        \lesssim  \eps^{-\frac{1}{2}} H^{k}.
  \end{align*}
  For the estimation of $W$ we follow the idea given in \cite{Reibiger}. From
  \begin{align*}
      \|W-IW\|_{L^2(1+\eps^{\frac{k-1}{k}},2)}
        \lesssim &
        H\|xW'\|_{L^2(1+\eps^{\frac{k-1}{k}},2)} \lesssim \eps^{\frac{1}{2}} H^{k},
  \end{align*}
  where we use assumption $e^{-\eps^{-1/k}}\leq H^{k-1},$ and from
  \begin{align*}
      \|W-IW\|_{L^2(1+\eps^{\frac{k-1}{k}},2)}
        \lesssim &
        H^2\|x^2W''\|_{L^2(1+\eps^{\frac{k-1}{k}},2)} \lesssim \eps^{-\frac{1}{2}} H^{k+1},
  \end{align*}
  we obtain
  \begin{align*}
      \|W-IW\|_{L^2(1+\eps^{\frac{k-1}{k}},2)}
        \lesssim  H^{k+\frac{1}{2}}.
  \end{align*}
  For the derivative we obtain
  \begin{align*}
      \|(W-IW)'\|_{L^2(1,1+\eps^{\frac{k-1}{k}})}
        \lesssim &
        h_i^k\|W^{(k+1)}\|_{L^2(1,1+\eps^{\frac{k-1}{k}})} \lesssim \eps^{-\frac{1}{2}} H^{k},
  \end{align*}
  and
  \begin{align*}
      \|(W-IW)'\|_{L^2(1+\eps^{\frac{k-1}{k}},2)}
        \lesssim &
        h_i\|W''\|_{L^2(1+\eps^{\frac{k-1}{k}},2)} \lesssim \eps^{-\frac{1}{2}} H^{k}.
  \end{align*}
  Collecting all these estimates, we get the statement of lemma.
\end{proof}

\begin{theorem}\label{th3}
  For the solution $u$ of problem (\ref{prob}) and the numerical solution $u_N$ of (\ref{eq:method}) on a coarser Dur\'{a}n mesh it holds
  \[\tnorm{u-u_N}\lesssim H^k.\]
\end{theorem}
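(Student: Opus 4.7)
The plan is to mirror the proof of Theorem~\ref{th2}, substituting the new interpolation estimates of Lemma~\ref{le2} for those of Lemma~\ref{le1}. Starting from the triangle inequality
\[
\tnorm{u-u_N}\leq\tnorm{u-Iu}+\tnorm{Iu-u_N},
\]
the first summand is controlled directly by \eqref{D4}. For the second, set $\eta:=u-Iu$ and $\chi:=Iu-u_N\in\U_N$; coercivity together with Galerkin orthogonality yields
\[
\tnorm{\chi}^2 \lesssim B(\chi,\chi) = B(\eta,\chi) = \eps\scp{\eta',\chi'}_{\Omega} + \scp{c\eta,\chi}_{\Omega} - \scp{b\eta',\chi}_{\Omega} + \scp{d\eta(\cdot-1),\chi}_{(1,2)},
\]
so it suffices to bound each of the four contributions by $H^{k}\tnorm{\chi}$.

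The diffusion, reaction, and shift terms are routine. The diffusion term satisfies $\eps\scp{\eta',\chi'}_{\Omega}\leq\tnorm{\eta}\tnorm{\chi}\lesssim H^{k}\tnorm{\chi}$ by Cauchy--Schwarz and \eqref{D4}. The reaction and shift terms both reduce to a product of the form $\|\eta\|_{L^2}\|\chi\|_{L^2}\lesssim H^{k+1/2}\tnorm{\chi}\leq H^{k}\tnorm{\chi}$ using \eqref{D3} and $H<1$; for the shift contribution one pairs $\|\eta\|_{L^2(0,1)}$ with $\|\chi\|_{L^2(1,2)}$. The convection term $-\scp{b\eta',\chi}_{\Omega}$ is treated by splitting $\eta=(S-IS)+(E-IE)+(W-IW)$. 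The smooth part is handled directly by the $\eps$-free bound $\|(S-IS)'\|_{L^2}\lesssim H^{k}$. For the boundary-layer part I would integrate by parts, noting that the boundary terms vanish because $\chi\in H_{0}^{1}(\Omega)$ and $E-IE$ vanishes at every mesh node; the resulting $\|E-IE\|_{L^2}\cdot\|\chi'\|$ piece is then of order $\eps^{1/2}H^{k+1}\cdot\eps^{-1/2}\tnorm{\chi}=H^{k+1}\tnorm{\chi}$, the bound $\|E-IE\|_{L^2}\lesssim\eps^{1/2}H^{k+1}$ being inherited from the proof of Lemma~\ref{le1}.

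The interior-layer contribution $-\scp{b(W-IW)',\chi}_{(1,2)}$ is the main obstacle, because $\|(W-IW)'\|_{L^2}\lesssim\eps^{-1/2}H^{k}$ no longer pairs with $\|\chi\|_{L^2}$ to give $H^{k}\tnorm{\chi}$. I would again integrate by parts: the boundary terms vanish because $W-IW$ vanishes at the mesh nodes $x=1$ and $x=1+\eps^{(k-1)/k}$, and because $\chi(2)=0$. On the coarse sub-interval $(1+\eps^{(k-1)/k},2)$ the intermediate estimate $\|W-IW\|_{L^2}\lesssim\eps^{1/2}H^{k}$ derived inside the proof of Lemma~\ref{le2} (prior to the geometric mean with $\eps^{-1/2}H^{k+1}$) is exactly what is needed to absorb the $\eps^{-1/2}$ from $\|\chi'\|\leq\eps^{-1/2}\tnorm{\chi}$, giving a contribution of order $H^{k}\tnorm{\chi}$. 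On the fine sub-interval $(1,1+\eps^{(k-1)/k})$ the hypothesis $\e^{-\eps^{-1/k}}\leq H^{k-1}$ of Lemma~\ref{le2} is what converts the exponential decay of $W$ into powers of $H$ strong enough to cancel the remaining $\eps^{-1/2}$ factor. Summing all contributions gives $\tnorm{\chi}^{2}\lesssim H^{k}\tnorm{\chi}$, hence $\tnorm{\chi}\lesssim H^{k}$, which combined with \eqref{D4} completes the proof.
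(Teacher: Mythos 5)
Your overall route is the same as the paper's: the authors' proof of Theorem~\ref{th3} is a single sentence deferring to the argument of Theorem~\ref{th2} with Lemma~\ref{le2} in place of Lemma~\ref{le1}, and you have filled in considerably more detail than they provide. You also correctly isolate the one genuinely new difficulty, namely the convection term for the interior layer, whose derivative bound degrades to $\eps^{-1/2}H^k$ on the coarser mesh so that the direct pairing with $\norm{\chi}{L^2}$ used in Theorem~\ref{th2} fails. Your integration by parts on the coarse subinterval, pairing $\norm{W-IW}{L^2(1+\eps^{(k-1)/k},2)}\lesssim\eps^{1/2}H^{k}$ with $\norm{\chi'}{L^2}\leq\eps^{-1/2}\tnorm{\chi}$, is exactly right, as is your treatment of the diffusion, reaction, shift and boundary-layer terms.

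The gap is on the fine subinterval $(1,1+\eps^{(k-1)/k})$, where you only gesture at the hypothesis $\e^{-\eps^{-1/k}}\leq H^{k-1}$ ``converting exponential decay into powers of $H$''. That mechanism does not operate there: near $x=1$ the factor $\e^{-\beta(x-1)/\eps}$ is of order one, and the hypothesis enters Lemma~\ref{le2} only through the tail beyond $1+\eps^{(k-1)/k}$. Concretely, with $h=H\eps^{(k-1)/k}$ the full-order estimate gives $\norm{W-IW}{L^2(1,1+\eps^{(k-1)/k})}\lesssim h^{k+1}\norm{W^{(k+1)}}{L^2}\lesssim H^{k+1}\eps^{1/2-1/k}$, and after multiplying by $\eps^{-1/2}\tnorm{\chi}$ you obtain $H^{k+1}\eps^{-1/k}\tnorm{\chi}$, which is \emph{not} $\lesssim H^{k}\tnorm{\chi}$ unless $H\lesssim\eps^{1/k}$ --- false in the regime of interest (e.g.\ $H=0.1$, $\eps=10^{-6}$, $k=2$). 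The repair is a case distinction: the complementary first-order bound $\norm{W-IW}{L^2(1,1+\eps^{(k-1)/k})}\lesssim h\norm{W'}{L^2}\lesssim H\eps^{(k-1)/k}\eps^{1/2}$ yields, after the same pairing, $H\eps^{(k-1)/k}\tnorm{\chi}\lesssim H^{k}\tnorm{\chi}$ precisely when $\eps^{1/k}\leq H$, so taking the minimum of the two interpolation bounds covers all values of $H$ and closes the argument. The paper's own one-line proof is silent on this point as well, but as written your proof does not close without this additional step.
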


\begin{proof}
  Similar to the proof of the Theorem \ref{th2} using the result of the Lemma \ref{le2}.
\end{proof}

  \section{Numerical results}\label{sec:numerics}
As an example, let us look at the following problem taken from \cite{BFLR}
    \begin{align*}
      -\eps u''(x)-(2+x)u'(x)+(3+x)u(x)-d(x)u(x-1)&=3,\,x\in(0,2),\\
      u(2)&=0,\\
      u(x)&=x^2,\,x\in(-1,0],
    \end{align*}
    where
    \[
     d(x) = \begin{cases}
              1-x,& x<1,\\
              2+\sin(4\pi x),&x\geq 1.
            \end{cases}
    \]
   In this case the exact solution is not known. Our numerical simulations are performed using the finite-element framework $\mathbb{SOFE}$
    developed by L. Ludwig, see\\ \texttt{github.com/SOFE-Developers/SOFE}. We compare the
    results both on a Dur\'{a}n mesh and on a coarse Dur\'{a}n mesh. Since the exact solution to
    this example is unknown, we use a reference solution as a proxy. This is
    computed on a standard or coarsened Dur\'{a}n mesh with $k=4$ and $H=0.05$.
    Table~\ref{tab:Duran}
    \begin{table}[htbp]
      \caption{Errors for various polynomial degrees on standard Dur\'{a}n meshes and $\eps=10^{-6}$\label{tab:Duran}}
      \begin{center}
        \begin{tabular}{lr|llllll}
              &      & \multicolumn{2}{c}{$k=1$}           & \multicolumn{2}{c}{$k=2$}           & \multicolumn{2}{c}{$k=3$}\\
          $H$ & $\tilde M$ & \multicolumn{2}{c}{$\tnorm{u-u_N}$} & \multicolumn{2}{c}{$\tnorm{u-u_N}$} & \multicolumn{2}{c}{$\tnorm{u-u_N}$}\\
          \hline
          0.90 &   46 & 3.14e-01  & 1.20 &  6.83e-02 &  2.44 & 1.04e-02 &  3.67\\
          0.80 &   50 & 2.85e-01  & 1.05 &  5.57e-02 &  2.09 & 7.63e-03 &  3.39\\
          0.70 &   56 & 2.52e-01  & 0.99 &  4.40e-02 &  2.08 & 5.19e-03 &  2.27\\
          0.60 &   64 & 2.21e-01  & 1.19 &  3.33e-02 &  2.39 & 3.84e-03 &  3.64\\
          0.50 &   74 & 1.86e-01  & 1.24 &  2.36e-02 &  2.49 & 2.26e-03 &  3.37\\
          0.40 &   88 & 1.50e-01  & 1.17 &  1.53e-02 &  2.31 & 1.26e-03 &  3.40\\
          0.30 &  112 & 1.13e-01  & 1.08 &  8.77e-03 &  2.17 & 5.56e-04 &  3.64\\
          0.20 &  162 & 7.59e-02  & 1.06 &  3.94e-03 &  2.12 & 1.45e-04 &  3.23\\
          0.10 &  310 & 3.81e-02  &      &  9.96e-04 &       & 1.78e-05 &
        \end{tabular}
      \end{center}
    \end{table}
    shows the results for different polynomial degrees $k$ on the standard Dur\'{a}n mesh for $\eps=10^{-6}$. Given
    are, for different values of $H$, the corresponding number of cells $\tilde M=2M$, the error measured
    in the energy norm and the numerical rate of convergence. Obviously we have a convergence
    of order $k$. Table~\ref{tab:coarsenedDuran}
    \begin{table}[htbp]
      \caption{Errors for various polynomial degrees on coarse Dur\'{a}n meshes and $\eps=10^{-6}$\label{tab:coarsenedDuran}}
      \begin{center}
        \begin{tabular}{l|rllrllrll}
              & \multicolumn{3}{c}{$k=1$}
              & \multicolumn{3}{c}{$k=2$}
              & \multicolumn{3}{c}{$k=3$}\\
          $H$ & $\tilde M$ & \multicolumn{2}{c}{$\tnorm{u-u_N}$}
              & $\tilde M$ & \multicolumn{2}{c}{$\tnorm{u-u_N}$}
              & $\tilde M$ & \multicolumn{2}{c}{$\tnorm{u-u_N}$}\\
          \hline
          0.90 &  25 & 3.14e-01 & 1.31 &  35 & 6.85e-02 &  2.49 &   39 & 1.02e-02 &  3.43\\
          0.80 &  27 & 2.84e-01 & 1.11 &  38 & 5.58e-02 &  1.93 &   42 & 7.91e-03 &  3.64\\
          0.70 &  30 & 2.53e-01 & 1.11 &  43 & 4.40e-02 &  2.19 &   47 & 5.25e-03 &  2.06\\
          0.60 &  34 & 2.20e-01 & 1.23 &  49 & 3.31e-02 &  2.23 &   54 & 3.94e-03 &  4.89\\
          0.50 &  39 & 1.86e-01 & 1.07 &  57 & 2.36e-02 &  2.64 &   62 & 2.01e-03 &  3.31\\
          0.40 &  47 & 1.52e-01 & 1.20 &  67 & 1.54e-02 &  2.26 &   74 & 1.12e-03 &  3.42\\
          0.30 &  60 & 1.13e-01 & 1.03 &  86 & 8.75e-03 &  2.18 &   95 & 4.76e-04 &  3.41\\
          0.20 &  86 & 7.81e-02 & 1.10 & 124 & 3.94e-03 &  2.11 &  137 & 1.37e-04 &  3.16\\
          0.10 & 165 & 3.81e-02 &      & 238 & 9.96e-04 &       &  262 & 1.76e-05 &
        \end{tabular}
      \end{center}
    \end{table}
    shows the results on the coarse version of the Dur\'{a}n mesh with $\tilde M=M+M_2$ cells and $\eps=10^{-6}$.
    Again we observe a convergence of order $k$. Note that in the case of $k=1$ we essentially have an equidistant mesh
    in $[1,2]$ with $M_2=\left\lceil\frac{1}{H}\right\rceil$ cells.

    Comparing the results of the two tables, there is little difference in the calculated errors, but a large reduction
    in the number of cells. Thus, using the coarsened mesh gives equally good results with less computational cost.

  \vspace{1em}
  {\bf Acknowledgment}. The first author is supported by the Ministry of Science, Technological Development and Innovation of the Republic of Serbia under grant no. 451-03-47/2023-01/200134.

\end{document}